\newtheorem{theorem}{Theorem}
\newtheorem{lemma}{Lemma}
\newcommand{\vq}{ {\vec q } }
\newcommand{\vv}{ {\vec v } }
\newcommand{\vn}{ {\vec n } }
\numberwithin{equation}{section}
\title{Weak Galerkin finite element method for Poisson's equation on polytopal meshes with arbitrary small edges or faces}
\author{Qingguang Guan \\
	\\
	Center for Computation and Technology\\
	Louisiana  State University,\\
	Baton Rouge, LA 70803, USA}
\date{}
\begin{document}
	\maketitle	
\begin{abstract}
In this paper, the weak Galerkin finite element method for second order elliptic problems employing polygonal or polyhedral meshes with arbitrary small edges or faces was analyzed. With the shape regular assumptions, optimal convergence order for $H^1$ and $L_2$ error estimates were obtained. Also element based and edge based error estimates were proved.
\end{abstract}


\section{Introduction}\label{introd}
The weak galerkin finite element method using triangulated meshes was proposed by J. Wang, etc, see \cite{JW13}. Since then, the method gained applications in multiple areas, see \cite{Lin13, Lin14, Guan18}. A weak Galerkin mixed finite element method for second order elliptic problems has been successfully applied to polytopal meshes in \cite{Wang14}. The method was further 
developed in \cite{Lin15}, which was no longer a mixed method and provided an elegant and reliable way to solve second order elliptic problems. However, in \cite{Wang14} and \cite{Lin15}, the shape regularity requires the length of each edge or the area of each face being proportional to the diameter of the polygon or polyhedron which is the element of the partition. 
To get more flexibility of generating polytopal meshes, in this paper, we presented the new shape regularity assumptions and additional analysis to extend the weak Galerkin finite element method to use polytopal meshes with arbitrary small edges or faces.

We define the $L_2$ norm as $\|\cdot\|_{L_2(\Omega)},$ the inner product as $(\cdot,\cdot)_{\Omega}$, and the vector-valued space $H({\rm div};\Omega)$ as
$$
H({\rm div};\Omega) = \left\{ \vv: \vv\in [L_2(\Omega)]^n, \nabla\cdot\vv\in L_2(\Omega)\right\}.
$$ 
The crucial part of weak Galerkin finite element method for second order problem is the definition of a weak gradient operator and its approximation. Suppose we have a polygonal or polyhedral domain $D \subset \mathbb{R}^n,  (n=2,3),$ with interior $D_0$ and boundary $\partial D$ and a ``weak function'' on $D$ as $v=(v_0,v_b)$ with $v_0\in L_2(D_0)$  on $D_0$, $v_b\in L_2(\partial D),$  on $\partial D.$ And $v_b$ is not necessarily associated with the trace of $v_0$ on $\partial D.$  Then we denote the space $W(D)$ as
\begin{equation}\label{wd}
W(D) := \left\{v = (v_0,v_b): v_0\in L_2(D_0), v_b\in L_2(\partial D)\right\}.
\end{equation}
For any $v\in W(D)$, the weak gradient of $v$ is defined as a linear functional $\nabla_{w}v$
in the dual space of $H({\rm div};D)$ whose action on $\vq \in H({\rm div};D)$ is
\begin{equation}\label{wg1}
(\nabla_{w}v,\vq\ )_D := -\int_D v_0\nabla\cdot\vq\ {\rm d}x
+
\int_{\partial D} v_b \vq\cdot\vn\ {\rm d}S,
\end{equation}
where $\vn$ is the outward normal direction to $\partial D.$ By trace theorem, we know that the definition of $\nabla_{w} v$ is well posed and $\nabla_{w} v = \nabla v$ if $v\in H^1(D).$

The discrete weak gradient operator is defined with a polynomial subspace of  $H({\rm div};D)$.
For any integer $k\geq 0,$ $\mathbb{P}_k(D)$ is the polynomial space with degree no more than $k$. Denote the vector-valued space $[\mathbb{P}_k(D)]^n,$ then the discrete weak gradient $\nabla_{w,k,D} v$ of $v\in W(D)$ is defined as the solution of the following equation
\begin{equation}\label{wg2}
(\nabla_{w,k,D}v,\vq_k\ )_D = -\int_D v_0\nabla\cdot\vq_k\ {\rm d}x
+
\int_{\partial D} v_b \vq_k\cdot\vn\ {\rm d}S,\quad \forall \vq_k\in [\mathbb{P}_k(D)]^n,
\end{equation} 
where  $\nabla_{w,k,D} v \in [\mathbb{P}_k(D)]^n$, the definition is also well posed.

The paper is organized as follows: With the techniques from \cite{Lin15, Brenner17,  Guan184,  Brenner17-2}, several useful Lemmas were proved in Section 2 to 4. In Section 2, the new shape regularity assumptions were given and the $L_2$ operators were defined. In Section 3, we denoted the weak norm and the discrete weak Galerkin finite element space. Also, the error estimates for the $L_2$ operators were obtained.   In Section 4, the weak Galerkin finite element method was applied to Poisson's equation. The $H^1$ and $L_2$ error estimates were proved being optimal. In Section 5, we draw some conclusions.
\section{Shape Regularity}
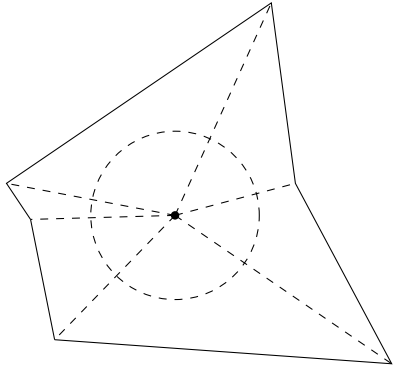
\begin{figure}[H]
	\begin{center}
		\begin{tikzpicture}[scale = 1.6]
		\coordinate (A) at (0.2,0.2);
		\coordinate (B) at (3,0);
		\coordinate (C) at (2.2,1.5);
		\coordinate (D) at (2,3);
		\coordinate (E) at (-0.2,1.5);
		\coordinate (F) at (0,1.2);
		\coordinate (O) at ($1/6*(A)+1/6*(B)+1/6*(C)+1/6*(D)+1/6*(E)+1/6*(F)$);
		\draw (A)
		--(B)
		--(C)
		--(D)
		--(E)
		--(F);
		\draw (F)--(A);
		\draw[style=dashed](O) circle (0.7);
		\fill [black] (O) circle (1pt);
		\draw[style=dashed](O)--(A);
		\draw[style=dashed](O)--(B);
		\draw[style=dashed](O)--(C);
		\draw[style=dashed](O)--(D);
		\draw[style=dashed](O)--(E);
		\draw[style=dashed](O)--(F);
		\end{tikzpicture}
	\end{center}\caption{A star shaped sub-domain $D$}\label{fg1}
\end{figure}
Let $\mathcal{T}_h$ be a partition of the domain $\Omega$ consisting of polygons in two dimensional space or polyhedrons
in three dimensional space. Denote by $\mathcal{E}_h$ the set of all edges or flat faces in $\mathcal{T}_h$ , and let $\mathcal{E}_h^0 = \mathcal{E}_h/ \partial \Omega$ be
the set of all interior edges or flat faces. For every element $D \in \mathcal{T}_h$ , we denote by
$|D|$ the area or volume of $D$ and by $h_D$ its diameter. Similarly, we denote by $|e|$ the
length or area of e and by $h_e$ the diameter of edge or flat face $e\in \mathcal{E}_h$ . We also set
as usual the mesh size of $\mathcal{T}_h$ by
$$h = \max\limits_{D \in \mathcal{T}_h} h_D.$$
All the elements of $\mathcal{T}_h$ are assumed to be closed and simply connected polygons
or polyhedrons; see Figure \ref{fg1} for an example in two dimensional space. We need some shape regularity assumptions for the partition $\mathcal{T}_h$
described as below.

Here the shape regularity assumptions are the same as in \cite{Brenner17-2}. Let $ {D}$ be the polygon or polyhedron with diameter $h_D$. Assume that 
\begin{equation}\label{assume1}
{D}\ {\rm is\ star\ shaped\ with\ respect\ to\ a\ disc/ball\ } \mathfrak{B}_D\subset D {\rm \ with\ radius\ =\ } \rho_D h_D,\ 0<\rho_D<1.
\end{equation}
Then we denote $\tilde{\mathfrak{B}}_D$ the  disc/ball concentric with $\mathfrak{B}_D$ whose radius is $h_D$. It's clear that 
\begin{equation}\label{assume2}
\mathfrak{B}_D\subset D\subset \tilde{\mathfrak{B}}_D.
\end{equation}
We will use the notation $A \apprle B$ to represent the inequality $A \leq (constant)B$.
The notation $A \approx B$ is equivalent to $A \apprle B$ and $A \apprge B$.
Figure \ref{fg1} is an example of $D$ satisfies the shape regularity assumptions. Based on the shape regularity assumptions \eqref{assume1} and \eqref{assume2}, there are several Lemmas. The hidden constants only depends on $\rho_D$ if there is no other statements.
\begin{lemma}\label{bramble}\cite{Bramble70} Bramble-Hilbert Estimates. Conditions \eqref{assume1}-\eqref{assume2} imply that we have the following estimates:
	\begin{equation}
	\inf\limits_{q\in\mathbb{P}_l} |\xi - q|_{H^m(D)} \apprle h^{l+1-m}
	|\xi|_{ H^{l+1}(D)}, \ 
	\forall \xi\in H^{l+1}(D),\ l = 0,\cdots, k,\ and\ 0\leq m \leq l.
	\end{equation}
\end{lemma}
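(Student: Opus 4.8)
The plan is to establish the Bramble-Hilbert estimate by the standard averaged-Taylor-polynomial (Sobolev/Poincaré) machinery on star-shaped domains, exploiting the two-sided containment \eqref{assume2} so that every hidden constant can be traced back to $\rho_D$ alone. The key point provided by \eqref{assume1}–\eqref{assume2} is that $D$ is a John domain (indeed star-shaped with respect to a ball of radius $\rho_D h_D$), and its diameter, the radius of the inscribed ball, and the radius of the circumscribed ball $\tilde{\mathfrak B}_D$ are all comparable up to factors depending only on $\rho_D$. This is exactly the geometric input needed to invoke the classical Bramble-Hilbert lemma with explicit constant dependence.

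\textbf{Step 1: Reduce to the top-order seminorm on a reference-type configuration.} Fix $\xi\in H^{l+1}(D)$ and $0\le m\le l$. I would take $q=Q_l\xi$, the averaged Taylor polynomial of degree $l$ associated with the ball $\mathfrak B_D$ (averaging the Taylor expansion of $\xi$ over $\mathfrak B_D$ against a smooth bump supported in $\mathfrak B_D$). This is well defined because $\mathfrak B_D\subset D$, and it reproduces polynomials of degree $\le l$. The remainder $\xi - Q_l\xi$ then satisfies, for each multi-index $\alpha$ with $|\alpha|=m$, a pointwise integral (Sobolev) representation in terms of a Riesz-type potential of the derivatives $\partial^\beta\xi$ with $|\beta|=l+1$; the kernel has the right homogeneity because $D$ is star-shaped with respect to $\mathfrak B_D$, and the "chunkiness" of the domain enters only through $\rho_D$.

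\textbf{Step 2: Scaling.} The cleanest route is to rescale $D$ by $h_D^{-1}$ to a domain $\hat D$ of unit diameter that is star-shaped with respect to a ball of radius $\rho_D$; on $\hat D$ the Bramble-Hilbert inequality $|\hat\xi-\hat q|_{H^m(\hat D)}\apprle|\hat\xi|_{H^{l+1}(\hat D)}$ holds with constant depending only on $\rho_D$ (and $n,k,m$), by Step 1. Then undo the scaling: each derivative of order $j$ picks up a factor $h_D^{-j}$ and the volume element gives $h_D^n$ (in the $L_2$ norms, $h_D^{n/2}$), so a direct bookkeeping of powers yields the factor $h_D^{l+1-m}$ in front of $|\xi|_{H^{l+1}(D)}$. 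Taking the infimum over $q\in\mathbb P_l$ on the left only strengthens the bound, giving the claimed estimate for each $l=0,\dots,k$.

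\textbf{The main obstacle} is not the algebra of scaling but making the constant dependence honest: one must check that the Sobolev/Poincaré-type inequality underlying Step 1 on the unit-diameter star-shaped domain $\hat D$ has a constant controlled purely by $\rho_D$ (together with $n$ and the polynomial degree), with no hidden dependence on the number of vertices or on small edges/faces. This is precisely where the star-shapedness with respect to a ball of radius proportional to the diameter — as opposed to the classical assumption tying edge lengths to $h_D$ — does the work: the averaged-Taylor construction and the ensuing potential estimates never see individual edges, only the ball $\mathfrak B_D$ and the outer ball $\tilde{\mathfrak B}_D$. Since this is the content of the cited reference \cite{Bramble70} (and is restated in \cite{Brenner17-2}), I would simply invoke it once the geometric setup is in place rather than reproving the potential estimate from scratch.
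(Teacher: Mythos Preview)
Your proposal is correct and is consistent with the paper's treatment: the paper does not actually give a proof but simply defers to \cite{Bramble70} and to Lemma~4.3.8 in \cite{Brenner07}, whose content is precisely the averaged-Taylor-polynomial argument on star-shaped domains that you outline. Your observation that the constant depends only on the chunkiness parameter $\rho_D$ (and not on individual edges or faces) is the relevant point for this paper's setting, and it is exactly what the cited references provide.
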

Details can be found in \cite{Brenner07}, Lemma 4.3.8.
\subsection{A Lipschitz Isomorphism between $D$ and $\mathfrak{B}_D$}
With the star-shaped assumption \eqref{assume1}, there exists a  Lipschitz isomorphism
$\Phi: \mathfrak{B}_D\rightarrow D$ such that both $|\Phi|_{W^{1,\infty}(\mathfrak{B}_D)}$ and $|\Phi|_{W^{1,\infty}(D)}$ are bounded by constant that only depends on $\rho_D$ (see \cite{V11}, Section 1.1.8).
It then follows that 
\begin{equation}\label{Dh}
|D|\approx h_D^n {\ \rm and \ } |\partial D|\approx h_D^{n-1},\ n=2,3,
\end{equation}
where $|D|$ is the area of $D$ ($n=2$) or the volume of $D$ ($n=3$), and $|\partial D|$ is the arclength of $\partial D$ or the surface area of $D$ ($n=3$). Moreover from Theorem 4.1 in \cite{J87}, we have 
\begin{eqnarray}
\|\xi\|_{L_2(\partial D)} &\approx& \|\xi\circ\Phi\|_{L_2(\partial \mathfrak{B}_D)},\quad \forall \xi\in L_2(\partial D), \label{iso1} \\
\|\xi\|_{L_2(  D)} &\approx& \|\xi\circ\Phi\|_{L_2( \mathfrak{B}_D)},\quad \forall \xi\in L_2(  D), \label{iso2}  \\
|\xi|_{H^1(D)} &\approx& |\xi\circ\Phi|_{H^1(  \mathfrak{B}_D)},\quad \forall \xi\in H^1(D). \label{iso3} 
\end{eqnarray}
Same as in \cite{Brenner17-2}, from \eqref{Dh}, \eqref{iso1}-\eqref{iso3} and the standard (scaled) trace inequalities for $H^1(\mathfrak{B}_D)$ we have
\begin{lemma}\label{trace}
(Trace Inequality (2.18) ) \cite{Brenner17-2}. 
Let $\mathcal{T}_h$ be a partition of the domain $\Omega$ into polygons ($n=2$) or polyhedrons ($n=3$). Assume that $D\in \mathcal{T}_h$ satisfies the assumptions \eqref{assume1} and \eqref{assume2} as specified above. Then we have
$$
h_D^{-1}\|\xi\|_{L_2(\partial D)}^2 \apprle  h_D^{-2}\|\xi\|_{L_2(D)}^2 + |\xi |_{H^1(D)}^2 ,
$$
for any $\xi \in H^1(D ).$
\end{lemma}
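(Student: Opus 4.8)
The plan is to reduce the trace inequality on the physical element $D$ to the corresponding scaled trace inequality on the reference ball $\mathfrak{B}_D$, where it is classical, and then transport the estimate back through the Lipschitz isomorphism $\Phi$. First I would invoke the standard trace inequality on a ball: for a ball $B$ of radius $r$ one has $r^{-1}\|\eta\|_{L_2(\partial B)}^2 \apprle r^{-2}\|\eta\|_{L_2(B)}^2 + |\eta|_{H^1(B)}^2$, with constant independent of $r$ (this follows from the unit-ball case by the dilation $x\mapsto x/r$). Applying this with $B=\mathfrak{B}_D$, whose radius is $\rho_D h_D \approx h_D$, gives $h_D^{-1}\|\eta\|_{L_2(\partial \mathfrak{B}_D)}^2 \apprle h_D^{-2}\|\eta\|_{L_2(\mathfrak{B}_D)}^2 + |\eta|_{H^1(\mathfrak{B}_D)}^2$ for all $\eta\in H^1(\mathfrak{B}_D)$, where the hidden constant depends only on $\rho_D$.

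Next I would set $\eta = \xi\circ\Phi$ for a given $\xi\in H^1(D)$. Since $\Phi$ is a Lipschitz isomorphism, $\eta\in H^1(\mathfrak{B}_D)$ and the three equivalences \eqref{iso1}, \eqref{iso2}, \eqref{iso3} apply: $\|\xi\|_{L_2(\partial D)}\approx \|\eta\|_{L_2(\partial\mathfrak{B}_D)}$, $\|\xi\|_{L_2(D)}\approx \|\eta\|_{L_2(\mathfrak{B}_D)}$, and $|\xi|_{H^1(D)}\approx |\eta|_{H^1(\mathfrak{B}_D)}$, all with constants depending only on $\rho_D$. Substituting the left-hand side of the ball inequality with $\|\eta\|_{L_2(\partial\mathfrak{B}_D)}^2 \approx \|\xi\|_{L_2(\partial D)}^2$ and bounding the right-hand side terms by $\|\eta\|_{L_2(\mathfrak{B}_D)}^2 \approx \|\xi\|_{L_2(D)}^2$ and $|\eta|_{H^1(\mathfrak{B}_D)}^2 \approx |\xi|_{H^1(D)}^2$ yields exactly
$$
h_D^{-1}\|\xi\|_{L_2(\partial D)}^2 \apprle h_D^{-2}\|\xi\|_{L_2(D)}^2 + |\xi|_{H^1(D)}^2,
$$
which is the claim. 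One should check the direction of each $\approx$ is used correctly — the left side needs $\|\xi\|_{L_2(\partial D)} \apprle \|\eta\|_{L_2(\partial\mathfrak{B}_D)}$ and the right side needs the reverse directions — but both directions are available since these are two-sided equivalences.

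The main obstacle, and the only genuine subtlety, is keeping the $h_D$-scaling correct throughout: the equivalences \eqref{iso1}–\eqref{iso3} are stated without any power of $h_D$, so they implicitly already absorb the correct scaling because $\mathfrak{B}_D$ has radius comparable to $h_D$ rather than radius $1$. In other words, I must use the scaled trace inequality on $\mathfrak{B}_D$ (with the explicit $r^{-1}$ and $r^{-2}$ factors for $r\approx h_D$), not the unit-ball version, and then the powers of $h_D$ on both sides of the target inequality match those produced by the ball estimate. Since $\rho_D h_D \le r \le h_D$ up to the fixed constant $\rho_D$, replacing $r$ by $h_D$ in these factors only changes constants by powers of $\rho_D$, which is permitted. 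Everything else is a direct substitution, so the proof is short; this is essentially the argument already indicated in the sentence preceding the lemma, following \cite{Brenner17-2}.
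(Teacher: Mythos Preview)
Your proposal is correct and follows exactly the route the paper indicates in the sentence immediately preceding the lemma: combine the standard scaled trace inequality on the ball $\mathfrak{B}_D$ with the norm equivalences \eqref{iso1}--\eqref{iso3} coming from the Lipschitz isomorphism $\Phi$. The paper does not write out a proof beyond that sentence (it cites \cite{Brenner17-2}), so your expanded version is precisely what is intended, including the observation that the radius $\rho_D h_D$ of $\mathfrak{B}_D$ can be replaced by $h_D$ at the cost of constants depending only on $\rho_D$.
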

\subsection{$L_2$ Projection Operators}
For each element $D \in \mathcal{T}_h$, denote by $Q_{k,D}^0$ the $L_2$ projection from $L_2 (D )$ onto $\mathbb{P}_k (D ).$
Analogously, for each edge or flat face $e \in \mathcal{E}_h$, let $Q_{k,D}^b$ be the $L_2$ projection operator
from $L_2 (e)$ onto $\mathbb{P}_k (e)$. 
We define a projection operator $Q_h$ as follows
\begin{equation}\label{l2proj}
Q_h v|_D := (Q_{k,D}^0 v_0 , Q_{k,D}^b v_b),\
\forall v \in W(D).
\end{equation}

Denote by $\mathbb{Q}_{k-1,D}$ the $L_2$ projection from $[L_2(D)]^n$ onto the local discrete
gradient space $[\mathbb{P}_{k-1} (D)]^n$. 

With these definitions, we also have the following Lemmas.
\begin{lemma}\label{l_discrete}
	For any $\vq\in [\mathbb{P}_k(D)]^n$, we have
	$$
	h_D\|\vq\|^2_{L_{2}(\partial D)}+h_D^{2}\|\nabla\cdot \vq\|^2_{L_2(D)}
	\apprle 
	\|\vq\|^2_{L_2(D)},
	$$	
	the hidden constant only depends on $\rho_D$ and $k$.	
\end{lemma}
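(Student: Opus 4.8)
The plan is to reduce both terms on the left-hand side to a single polynomial inverse estimate, namely $h_D^{2}|\vq|_{H^1(D)}^2 \apprle \|\vq\|_{L_2(D)}^2$ with constant depending only on $\rho_D$, $k$ and $n\in\{2,3\}$, and then to prove that estimate by transplanting $\vq$ to a reference ball. For the first reduction, write $\nabla\cdot\vq=\sum_{i=1}^{n}\partial_i q_i$, so that Cauchy--Schwarz gives $|\nabla\cdot\vq|^2\le n|\nabla\vq|^2$ pointwise, hence $h_D^{2}\|\nabla\cdot\vq\|_{L_2(D)}^2\apprle h_D^{2}|\vq|_{H^1(D)}^2$. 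For the second, applying the trace inequality of Lemma~\ref{trace} to each component of $\vq\in[H^1(D)]^n$ and summing yields $h_D^{-1}\|\vq\|_{L_2(\partial D)}^2\apprle h_D^{-2}\|\vq\|_{L_2(D)}^2+|\vq|_{H^1(D)}^2$, i.e. $h_D\|\vq\|_{L_2(\partial D)}^2\apprle\|\vq\|_{L_2(D)}^2+h_D^{2}|\vq|_{H^1(D)}^2$. Adding the two bounds and invoking the claimed inverse estimate (twice) then finishes the proof.

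To prove $h_D^{2}|\vq|_{H^1(D)}^2\apprle\|\vq\|_{L_2(D)}^2$, I would use the nesting $\mathfrak{B}_D\subset D\subset\tilde{\mathfrak{B}}_D$ from \eqref{assume2}. Let $x_D$ be the common center of $\mathfrak{B}_D$ and $\tilde{\mathfrak{B}}_D$, and let $F(\hat x)=x_D+h_D\hat x$ be the affine map carrying the unit ball $\hat B\subset\Rt^n$ onto $\tilde{\mathfrak{B}}_D$; then $F^{-1}(\mathfrak{B}_D)=\hat B_{\rho_D}$, the ball of radius $\rho_D$ centered at the origin, which is contained in $\hat B$ since $\rho_D<1$. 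Setting $\hat\vq:=\vq\circ F\in[\mathbb{P}_k(\hat B)]^n$, the standard change of variables gives $|\vq|_{H^1(\tilde{\mathfrak{B}}_D)}^2=h_D^{n-2}|\hat\vq|_{H^1(\hat B)}^2$ and $\|\vq\|_{L_2(\mathfrak{B}_D)}^2=h_D^{n}\|\hat\vq\|_{L_2(\hat B_{\rho_D})}^2$. On the finite-dimensional space $[\mathbb{P}_k(\hat B)]^n$, both $\hat\vq\mapsto\|\hat\vq\|_{L_2(\hat B)}$ and $\hat\vq\mapsto\|\hat\vq\|_{L_2(\hat B_{\rho_D})}$ are norms (a polynomial vanishing on a ball of positive radius is identically zero) while $\hat\vq\mapsto|\hat\vq|_{H^1(\hat B)}$ is a continuous seminorm, so equivalence of norms on a finite-dimensional space furnishes a constant $C=C(k,n,\rho_D)$ with $|\hat\vq|_{H^1(\hat B)}\le C\,\|\hat\vq\|_{L_2(\hat B_{\rho_D})}$. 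Combining this with the monotonicities $|\vq|_{H^1(D)}\le|\vq|_{H^1(\tilde{\mathfrak{B}}_D)}$ and $\|\vq\|_{L_2(\mathfrak{B}_D)}\le\|\vq\|_{L_2(D)}$ (both from \eqref{assume2}) and scaling back yields $h_D^{2}|\vq|_{H^1(D)}^2\le h_D^{n}C^{2}\|\hat\vq\|_{L_2(\hat B_{\rho_D})}^2=C^{2}\|\vq\|_{L_2(\mathfrak{B}_D)}^2\le C^{2}\|\vq\|_{L_2(D)}^2$, which is the desired inverse estimate.

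The computations are all routine scalings; the only delicate point is the dependence of the constant: the $k$-dependence originates from the polynomial inverse inequality on the fixed ball $\hat B$, and the $\rho_D$-dependence enters precisely when passing from the $L_2$ norm on $\hat B$ to the $L_2$ norm on the smaller ball $\hat B_{\rho_D}$ — a factor that degenerates as $\rho_D\to0$, consistently with the statement. The genuine structural obstacle, and the reason the argument cannot be done by a single rescaling, is that $\partial D$ is the boundary of a polytope rather than of a ball, so it cannot be pulled back under $F$ directly; this is exactly what Lemma~\ref{trace} is for, which is why that lemma is proved first. The divergence term, by contrast, is harmless: $\nabla\cdot\vq$ has degree one less than $\vq$ and is controlled pointwise by $|\nabla\vq|$, so it needs no separate treatment.
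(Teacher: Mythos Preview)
Your argument is correct and follows essentially the same route as the paper: a componentwise reduction to a scalar inverse estimate of the form $h_D\|q_i\|_{L_2(\partial D)}^2+h_D^2|q_i|_{H^1(D)}^2\apprle\|q_i\|_{L_2(D)}^2$, followed by summation over $i$. The only difference is cosmetic: the paper simply invokes Lemma~2.3 of \cite{Brenner17-2} for this scalar estimate, whereas you unpack it yourself, separating the boundary term (handled via Lemma~\ref{trace}) from the $H^1$-seminorm inverse estimate (handled via the nesting $\mathfrak{B}_D\subset D\subset\tilde{\mathfrak{B}}_D$ and norm equivalence on $[\mathbb{P}_k]^n$). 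Your version is more self-contained; the paper's is terser but relies on the cited reference.
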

\begin{proof}
Suppose $n=2$, and $\vq = (q_1,q_2)$, then by Lemma 2.3 in \cite{Brenner17-2}, we have
$$
	h_D\|q_i\|^2_{L_{2}(\partial D)}+h_D^{2} | q_i|^2_{H^1(D)}
\apprle 
\|q_i\|^2_{L_2(D)},\ i = 1,2,
$$
so that 
	$$
h_D\|\vq\|^2_{L_{2}(\partial D)}+h_D^{2}\|\nabla\cdot \vq\|^2_{L_2(D)}
\apprle 
\|\vq\|^2_{L_2(D)}.
$$	
For $n=3,$ the proof is similar.
\end{proof}
\begin{lemma}\label{l4}
	Lemma 3.9 \cite{Brenner17-2}.  Assume that $D$ satisfies all the assumptions
	as specified above. Then, we have
	$$
	 |Q_{k,D}^0 \xi|_{H^1(D)} \apprle | \xi |_{H^1(D)},\ \forall \xi\in H^1(D),
	$$
	the hidden constant only depends on $\rho_D$ and $k$.
\end{lemma}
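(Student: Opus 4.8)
The plan is to bound $|Q_{k,D}^0\xi|_{H^1(D)}$ by going through the $L_2$ norm: the $L_2$-stability of the projection combined with a polynomial inverse estimate on $D$ would give a bound involving $h_D^{-1}\|\xi\|_{L_2(D)}$, which is too weak, so the trick is to first subtract a constant from $\xi$ and then let a Poincar\'e inequality absorb the spurious factor $h_D^{-1}$. If $k=0$ there is nothing to prove, since $Q_{0,D}^0\xi$ is a constant and its $H^1$ seminorm vanishes; the argument below in fact covers every $k\ge 0$ uniformly.

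First I would let $\bar\xi$ be the average of $\xi$ over $D$, which is the $L_2$-best constant approximation of $\xi$. Being constant, $\bar\xi\in\mathbb{P}_k(D)$, so $Q_{k,D}^0\bar\xi=\bar\xi$, and the Poincar\'e inequality on the star-shaped element $D$ (Lemma \ref{bramble} with $l=m=0$) gives $\|\xi-\bar\xi\|_{L_2(D)}\apprle h_D\,|\xi|_{H^1(D)}$ with a constant depending only on $\rho_D$. By linearity of the projection and $|\bar\xi|_{H^1(D)}=0$,
$$|Q_{k,D}^0\xi|_{H^1(D)} = \big|Q_{k,D}^0(\xi-\bar\xi)\big|_{H^1(D)}.$$
Since $Q_{k,D}^0(\xi-\bar\xi)\in\mathbb{P}_k(D)$, the polynomial inverse estimate $|p|_{H^1(D)}\apprle h_D^{-1}\|p\|_{L_2(D)}$ for $p\in\mathbb{P}_k(D)$ — which is exactly the $H^1$-seminorm part of the bound from Lemma 2.3 of \cite{Brenner17-2} already used in the proof of Lemma \ref{l_discrete} — applies, and the $L_2$ projection being a contraction on $L_2(D)$ yields $\|Q_{k,D}^0(\xi-\bar\xi)\|_{L_2(D)}\le\|\xi-\bar\xi\|_{L_2(D)}$.

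Chaining these three facts,
$$|Q_{k,D}^0\xi|_{H^1(D)} \apprle h_D^{-1}\big\|Q_{k,D}^0(\xi-\bar\xi)\big\|_{L_2(D)} \le h_D^{-1}\|\xi-\bar\xi\|_{L_2(D)} \apprle |\xi|_{H^1(D)},$$
and tracking the constants shows the hidden constant depends only on $\rho_D$ (through Poincar\'e and through the inverse estimate) and on $k$ (through the inverse estimate), as claimed.

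The only step that genuinely uses the shape-regularity framework of this paper — rather than a classical quasi-uniform mesh — is the polynomial inverse estimate on $D$, where one must check that the constant is insensitive to arbitrarily small edges or faces. This is the case: it follows from the inclusions $\mathfrak{B}_D\subset D\subset\tilde{\mathfrak{B}}_D$, the scaled inverse estimate on the ball $\tilde{\mathfrak{B}}_D$ of radius $h_D$, and the equivalence of the $L_2$ norms of degree-$k$ polynomials on the concentric balls $\mathfrak{B}_D$ and $\tilde{\mathfrak{B}}_D$ (a finite-dimensional norm equivalence whose constant depends only on the radius ratio $\rho_D$, on $k$, and on $n\in\{2,3\}$); equivalently one just invokes Lemma 2.3 of \cite{Brenner17-2} as in Lemma \ref{l_discrete}. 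With that ingredient in place, the remaining steps are entirely routine.
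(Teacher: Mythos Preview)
Your argument is correct. The paper itself does not supply a proof of this lemma; it simply cites Lemma~3.9 of \cite{Brenner17-2} and moves on. Your proof---subtract the mean, apply the polynomial inverse estimate from Lemma~2.3 of \cite{Brenner17-2} (the same ingredient invoked in Lemma~\ref{l_discrete}), use the $L_2$-contraction property of the projection, and close with the Poincar\'e/Bramble--Hilbert bound---is exactly the standard route and is, in fact, essentially how the cited reference establishes the result. Your remark that the shape-regularity enters only through the inverse estimate (via the concentric balls $\mathfrak{B}_D\subset D\subset\tilde{\mathfrak{B}}_D$) and through Poincar\'e is accurate and worth keeping.
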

\begin{lemma}\label{leq}
Lemma 5.1 in \cite{Lin15}. Let $Q_h$ be the projection operator defined as in \eqref{l2proj}. Then, on each
element $D \in \mathcal{T}_h$, we have
$$\nabla_{w,k-1,D}(Q_h \xi) = \mathbb{Q}_{k-1,D} (\nabla\xi),\ 
\forall \xi\in H^1(D).$$
\end{lemma}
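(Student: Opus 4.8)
The plan is to verify the identity by testing both sides against an arbitrary polynomial vector field $\vq\in[\mathbb{P}_{k-1}(D)]^n$ and exploiting the defining relations of the two $L_2$ projections together with integration by parts. Fix $\xi\in H^1(D)$ and write $Q_h\xi|_D=(Q_{k,D}^0\xi,\,Q_{k,D}^b\xi)$. By the definition \eqref{wg2} of the discrete weak gradient applied to $v=Q_h\xi$,
$$(\nabla_{w,k-1,D}(Q_h\xi),\vq\ )_D=-\int_D (Q_{k,D}^0\xi)\,\nabla\cdot\vq\ {\rm d}x+\int_{\partial D}(Q_{k,D}^b\xi)\,\vq\cdot\vn\ {\rm d}S .$$

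Next I would strip off the two projections. Since $\vq\in[\mathbb{P}_{k-1}(D)]^n$, its divergence satisfies $\nabla\cdot\vq\in\mathbb{P}_{k-1}(D)\subset\mathbb{P}_k(D)$, so by the definition of the $L_2$ projection $Q_{k,D}^0$ the first integral equals $\int_D \xi\,\nabla\cdot\vq\ {\rm d}x$. Likewise, on every flat face $e\subset\partial D$ the outward normal $\vn$ is constant, hence $\vq\cdot\vn|_e\in\mathbb{P}_{k-1}(e)\subset\mathbb{P}_k(e)$, and the definition of $Q_{k,D}^b$ gives $\int_{\partial D}(Q_{k,D}^b\xi)\,\vq\cdot\vn\ {\rm d}S=\int_{\partial D}\xi\,\vq\cdot\vn\ {\rm d}S$ (the trace of $\xi$ on $\partial D$ being well defined because $\xi\in H^1(D)$). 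Applying Green's formula to $\xi\in H^1(D)$ and $\vq\in H({\rm div};D)$ then yields
$$(\nabla_{w,k-1,D}(Q_h\xi),\vq\ )_D=-\int_D \xi\,\nabla\cdot\vq\ {\rm d}x+\int_{\partial D}\xi\,\vq\cdot\vn\ {\rm d}S=\int_D\nabla\xi\cdot\vq\ {\rm d}x=(\nabla\xi,\vq\ )_D .$$

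Finally, because $\vq\in[\mathbb{P}_{k-1}(D)]^n$, the definition of the $L_2$ projection $\mathbb{Q}_{k-1,D}$ onto $[\mathbb{P}_{k-1}(D)]^n$ gives $(\nabla\xi,\vq\ )_D=(\mathbb{Q}_{k-1,D}(\nabla\xi),\vq\ )_D$. Combining the two displays shows $(\nabla_{w,k-1,D}(Q_h\xi)-\mathbb{Q}_{k-1,D}(\nabla\xi),\vq\ )_D=0$ for every $\vq\in[\mathbb{P}_{k-1}(D)]^n$; since both $\nabla_{w,k-1,D}(Q_h\xi)$ and $\mathbb{Q}_{k-1,D}(\nabla\xi)$ lie in $[\mathbb{P}_{k-1}(D)]^n$, choosing $\vq$ to be their difference forces it to vanish, which is the asserted identity.

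I do not expect a genuine obstacle here; the two points that need a moment's care are (i) the degree bookkeeping that makes $Q_{k,D}^0$ and $Q_{k,D}^b$ act as the identity on $\nabla\cdot\vq$ and on $\vq\cdot\vn|_e$ respectively — this is precisely where the mismatch between degree $k$ for $Q_h$ and degree $k-1$ for the weak gradient is used — and (ii) recording that $\xi\in H^1(D)$ carries enough regularity for its boundary trace and for Green's formula to be legitimate.
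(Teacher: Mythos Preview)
Your argument is correct and is exactly the standard proof of this commutativity identity; the paper does not supply its own proof but simply cites Lemma~5.1 in \cite{Lin15}, whose argument is essentially what you wrote.
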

The following lemma provides some estimates for the projection operators $Q_h$ and
$\mathbb{Q}_{k-1,D}.$
\begin{lemma}\label{ler}
Let $D$ satisfy the shape regular
assumptions as given above. Then for $\xi\in H^{k+1}(D)$, we have
\begin{equation}\label{l2Q1}
\|\xi-Q_{k,D}^0 \xi\|_{L_2(D)}^2+
h_D^2 |\xi-Q_{k,D}^0 \xi |_{H^1(D)}^2
\apprle  h^{2(k+1)}\|\xi\|_{H^{k+1}(D)}^2,
\end{equation}
\begin{equation}\label{l2Q2}
\|
\nabla\xi - \mathbb{Q}_{k-1,D}\nabla\xi
\|_{L_2(D)}^2
+
h_D^2|\nabla\xi - \mathbb{Q}_{k-1,D}\nabla\xi|_{H^1(D)}^2
\apprle h^{2k}\|\xi\|_{H^{k+1}(D)}^2,
\end{equation}
the hidden constant only depends on $\rho_D$ and $k$.
\end{lemma}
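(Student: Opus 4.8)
The plan is to prove the two estimates \eqref{l2Q1} and \eqref{l2Q2} separately, each time reducing the statement to the Bramble--Hilbert estimate of Lemma~\ref{bramble} together with the $H^1$-stability of the projection in Lemma~\ref{l4}. For \eqref{l2Q1}, first observe that $Q_{k,D}^0 q = q$ for every $q\in\mathbb P_k(D)$, so for an arbitrary $q\in\mathbb P_k(D)$ we may write $\xi-Q_{k,D}^0\xi = (\xi-q) - Q_{k,D}^0(\xi-q)$. The $L_2$-term is then controlled by $\|\xi-q\|_{L_2(D)}$ using the contraction property $\|Q_{k,D}^0 w\|_{L_2(D)}\le\|w\|_{L_2(D)}$, and taking the infimum over $q\in\mathbb P_k$ and applying Lemma~\ref{bramble} with $l=k$, $m=0$ gives the bound $h_D^{2(k+1)}|\xi|_{H^{k+1}(D)}^2\le h^{2(k+1)}\|\xi\|_{H^{k+1}(D)}^2$. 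For the seminorm term, use Lemma~\ref{l4}: $|Q_{k,D}^0(\xi-q)|_{H^1(D)}\apprle|\xi-q|_{H^1(D)}$, hence $|\xi-Q_{k,D}^0\xi|_{H^1(D)}\apprle|\xi-q|_{H^1(D)}$, and again taking the infimum over $q$ and invoking Lemma~\ref{bramble} with $l=k$, $m=1$ yields $h_D^2|\xi-Q_{k,D}^0\xi|_{H^1(D)}^2\apprle h_D^2 h^{2k}|\xi|_{H^{k+1}(D)}^2\le h^{2(k+1)}\|\xi\|_{H^{k+1}(D)}^2$. Adding the two contributions gives \eqref{l2Q1}.

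For \eqref{l2Q2}, the idea is the same but applied componentwise to the vector field $\nabla\xi\in[H^k(D)]^n$, whose $j$-th component $\partial_j\xi$ lies in $H^k(D)$. Since $\mathbb Q_{k-1,D}$ is the componentwise $L_2$-projection onto $\mathbb P_{k-1}(D)$, write $\partial_j\xi-\mathbb Q_{k-1,D}\partial_j\xi = (\partial_j\xi - q) - Q_{k-1,D}^0(\partial_j\xi - q)$ for arbitrary $q\in\mathbb P_{k-1}(D)$. The $L_2$-term is bounded, after taking the infimum over $q$, by Lemma~\ref{bramble} with $l=k-1$, $m=0$, giving $h_D^{2k}|\partial_j\xi|_{H^k(D)}^2\le h^{2k}\|\xi\|_{H^{k+1}(D)}^2$ once summed over $j$. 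For the $H^1$-seminorm term we again use Lemma~\ref{l4} (with $k$ replaced by $k-1$) to get $|Q_{k-1,D}^0(\partial_j\xi - q)|_{H^1(D)}\apprle|\partial_j\xi - q|_{H^1(D)}$, so after the infimum over $q$, Lemma~\ref{bramble} with $l=k-1$, $m=1$ gives $h_D^2|\partial_j\xi - \mathbb Q_{k-1,D}\partial_j\xi|_{H^1(D)}^2\apprle h_D^2 h^{2(k-1)}|\partial_j\xi|_{H^k(D)}^2\le h^{2k}\|\xi\|_{H^{k+1}(D)}^2$. Summing over the $n$ components and adding the two terms yields \eqref{l2Q2}.

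The only points requiring a little care are bookkeeping ones rather than genuine obstacles: one must use $h_D\le h$ to pass from the element diameter to the global mesh size in the right-hand sides, and one must be slightly careful that Lemma~\ref{bramble} as stated applies for $l=0,\dots,k$ with $0\le m\le l$, which is exactly the range ($l=k$ with $m\in\{0,1\}$ for \eqref{l2Q1}, and $l=k-1$ with $m\in\{0,1\}$ for \eqref{l2Q2}) needed here, assuming $k\ge1$ so that the case $m=1$, $l=k-1$ is admissible. I also implicitly use that the constants from Lemmas~\ref{bramble} and~\ref{l4} depend only on $\rho_D$ and the polynomial degree, which is stated in those lemmas, so the hidden constant in \eqref{l2Q1}--\eqref{l2Q2} has the claimed dependence. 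No step here is a serious obstacle; the proof is essentially a two-line application of Bramble--Hilbert plus projection stability, done once for $\xi$ and once for each component of $\nabla\xi$.
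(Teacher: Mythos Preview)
Your proposal is correct and follows essentially the same route as the paper: both arguments reduce each term to Bramble--Hilbert (Lemma~\ref{bramble}) via the identity $\xi-Q_{k,D}^0\xi=(\xi-q)-Q_{k,D}^0(\xi-q)$ together with the $H^1$-stability of the projection (Lemma~\ref{l4}), and then repeat the argument componentwise on $\nabla\xi$ for \eqref{l2Q2}. One small bookkeeping slip: the case $l=k-1$, $m=1$ in Lemma~\ref{bramble} actually requires $k\ge 2$, not $k\ge 1$; for $k=1$ the seminorm term in \eqref{l2Q2} is simply $h_D^2|\partial_j\xi|_{H^1(D)}^2\le h^2\|\xi\|_{H^2(D)}^2$, which is the claimed bound directly, so no genuine issue arises.
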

\begin{proof}
For the $L_2$ projection $Q_{k,D}^0$ in (\ref{l2Q1}), with Lemma \ref{bramble}, we have
$$
\|\xi-Q_{k,D}^0 \xi\|_{L_2(D)}^2
\apprle  h^{2(k+1)}\|\xi\|_{H^{k+1}(D)}^2,
$$
Let $p$ be any polynomial  with degree $k$, with Lemma \ref{bramble} and Lemma \ref{l4}, we have
\begin{eqnarray*}
 | \xi-Q_{k,D}^0 \xi |_{H^1(D)}
&\leq&
 | \xi- p |_{H^1(D)}+ | p-Q_{k,D}^0\xi |_{H^1(D)}\\
&\leq&
 | \xi- p |_{H^1(D)}+  | Q_{k,D}^0 (p-\xi) |_{H^1(D)}\\
&\apprle&
 | \xi- p  |_{H^1(D)}\\
&\apprle&
h^{k}\|\xi\|_{H^{k+1}(D)}.
\end{eqnarray*}
For the $L_2$ projection $\mathbb{Q}_{k-1,D}$ in \eqref{l2Q2}, with Lemma \ref{bramble}, suppose $\nabla\xi = (\xi_x,\xi_y)$ for $n=2$, we have
\begin{eqnarray*}
\|
\nabla\xi - \mathbb{Q}_{k-1,D}\nabla\xi
\|_{L_2(D)}
&\apprle& 
\|
\xi_x - Q_{k-1,D}^0\xi_x
\|_{L_2(D)}+\|
\xi_y - Q_{k-1,D}^0\xi_y
\|_{L_2(D)}
\\
&\apprle& h^{k}\|\xi\|_{H^{k+1}(D)}.
\end{eqnarray*}	
Then we consider the second term in \eqref{l2Q2} with Lemma \ref{bramble} and Lemma \ref{l4},
\begin{eqnarray*}
|\nabla\xi - \mathbb{Q}_{k-1,D}\nabla\xi|_{H^1(D)}^2
&=&
|\xi_x - Q_{k-1,D}^0\xi_x|_{H^1(D)}^2
+
|\xi_y - Q_{k-1,D}^0\xi_y|_{H^1(D)}^2\\
&\apprle&
h^{2(k-1)}\|\xi\|_{H^{k+1}(D)}^2.
\end{eqnarray*}	
The case $n=3$ is similar. So that \eqref{l2Q1} and \eqref{l2Q2} are proved.
\end{proof}
\section{The Weak Galerkin Finite Element Scheme}
Suppose 
$\Omega$ is a bounded convex polygonal or polyhedral domain in $\mathbb{R}^n, (n=2,3)$,
$\mathcal{T}_h$ is a shape regular partition of $\Omega$. On each $D\in \mathcal{T}_h,$ we have $W(D)$ defined in \eqref{wd}. Then let $W$ be the weak functional space on $\mathcal{T}_h$ as
$$
W := \prod_{D\in \mathcal{T}_h} W(D).
$$
Same as Section 4.2 in \cite{Lin15}, we denote $V$ as a subspace of $W$. For each interior edge or face $e \in \mathcal{E}_h^0$, there are $D_1$ and $D_2$, so that $e\subset \partial D_1\cap \partial D_2$. Denote $v\in V$, so that for $v_i\in W(D_i), i=1,2,$  we have 
$$
v_1|_e = v_2|_e.
$$
Then the weak norm of $v\in V$ is defined as
\begin{equation}\label{w1n}
| v |^2_{k-1,w} = \sum\limits_{D\in\mathcal{T}_h}
\int_{D}\nabla_{w,k-1,D} v\cdot\nabla_{w,k-1,D}  v\ {\rm d} x
+ 
h_D^{-1}
\langle  
v_0-v_b, v_0-v_b
\rangle_{\partial D},
\end{equation}
where $k\geq 1$ is integer.
Let $\mathbb{P}_k(D_0)$ be the set of polynomials on $D_0$ with degree no more than $k$, and $\mathbb{P}_k(e)$ be the set of polynomials on each edge or face $e\in \mathcal{E}_h$. Then the weak finite element space is given by
\begin{equation}\label{Sjl}
V_h:= \{v: v|_{D_0}\in \mathbb{P}_k(D_0)\ \forall D\in \mathcal{T}_h \ {\rm and}\ v|_{e}\in \mathbb{P}_k(e) \ \forall e\in \mathcal{E}_h \}.
\end{equation}
Denote the space $V^0_h$ as a subspace of $V_h$ which has vanishing boundary value on $\partial\Omega$ by
\begin{equation}\label{Sjl0}
V^0_h := \{v: v\in V_h \ {\rm and}\ v|_{\partial\Omega} =0  \}.
\end{equation}
\begin{lemma}\label{lem_h1}
Assume that $\mathcal{T}_h$ is shape regular. Then for any $w \in H^{k+1} (\Omega)$ and
$v = (v_0, v_b) \in V_h$, we have
\begin{equation}\label{e-trace1}
\left|
\sum\limits_{D\in \mathcal{T}_h} h_D^{-1}
\langle  
Q_{k,D}^0 w-Q_{k,D}^b w , 
v_0-v_b
\rangle_{\partial D}
\right|\apprle h^k\|w\|_{H^{k+1}(\Omega)}|v|_{k-1,w},
\end{equation}
\begin{equation}\label{e-trace2}
\left|
\sum\limits_{D\in \mathcal{T}_h}
\langle  
(\nabla w - \mathbb{Q}_{k-1,D} \nabla w)\cdot\vn,   
v_0-v_b
\rangle_{\partial D}
\right|\apprle   h^k\|w\|_{H^{k+1}(\Omega)}|v|_{k-1,w},
\end{equation}
where $k\geq 1$ and the hidden constant only depends on $\rho_D$ and $k$. 
\end{lemma}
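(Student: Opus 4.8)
The plan is to prove both estimates by the same two-step Cauchy--Schwarz argument: first peel off the factor $|v|_{k-1,w}$, then bound the remaining ``data'' factor using only the trace inequality of Lemma~\ref{trace} and the projection-error estimates of Lemma~\ref{ler}. A point worth emphasizing at the outset is that throughout we divide only by the element diameter $h_D$, never by the length $h_e$ of an individual (possibly tiny) edge or face, which is exactly why arbitrarily small edges or faces cause no loss.

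For \eqref{e-trace1} I would start from
$$\left|\sum_{D\in\mathcal{T}_h}h_D^{-1}\langle Q_{k,D}^0 w-Q_{k,D}^b w,\,v_0-v_b\rangle_{\partial D}\right|\le\left(\sum_{D}h_D^{-1}\|Q_{k,D}^0 w-Q_{k,D}^b w\|_{L_2(\partial D)}^2\right)^{1/2}\left(\sum_{D}h_D^{-1}\|v_0-v_b\|_{L_2(\partial D)}^2\right)^{1/2},$$
where the last factor is $\le|v|_{k-1,w}$ directly from \eqref{w1n}. For the other factor I would observe that on each edge or face $e\subset\partial D$ one has $(Q_{k,D}^0 w)|_e\in\mathbb{P}_k(e)$, hence $Q_{k,D}^0 w-Q_{k,D}^b w=Q_{k,D}^b\big((Q_{k,D}^0 w-w)|_e\big)$, so by $L_2(e)$-stability of $Q_{k,D}^b$ and summation over the edges/faces of $\partial D$ we obtain $\|Q_{k,D}^0 w-Q_{k,D}^b w\|_{L_2(\partial D)}\le\|Q_{k,D}^0 w-w\|_{L_2(\partial D)}$. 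Then Lemma~\ref{trace} applied to $Q_{k,D}^0 w-w\in H^1(D)$ gives $h_D^{-1}\|Q_{k,D}^0 w-w\|_{L_2(\partial D)}^2\apprle h_D^{-2}\|Q_{k,D}^0 w-w\|_{L_2(D)}^2+|Q_{k,D}^0 w-w|_{H^1(D)}^2$, and the element-scale form of \eqref{l2Q1} bounds this by $\apprle h_D^{2k}\|w\|_{H^{k+1}(D)}^2\le h^{2k}\|w\|_{H^{k+1}(D)}^2$; summing over $D$ and using $\sum_D\|w\|_{H^{k+1}(D)}^2=\|w\|_{H^{k+1}(\Omega)}^2$ finishes it.

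For \eqref{e-trace2} the same outer Cauchy--Schwarz, combined with $|\vn|=1$ so that $\|(\nabla w-\mathbb{Q}_{k-1,D}\nabla w)\cdot\vn\|_{L_2(\partial D)}\le\|\nabla w-\mathbb{Q}_{k-1,D}\nabla w\|_{L_2(\partial D)}$, reduces the task to showing $\sum_D h_D\|\nabla w-\mathbb{Q}_{k-1,D}\nabla w\|_{L_2(\partial D)}^2\apprle h^{2k}\|w\|_{H^{k+1}(\Omega)}^2$. Applying Lemma~\ref{trace} componentwise to $\nabla w-\mathbb{Q}_{k-1,D}\nabla w\in[H^1(D)]^n$ yields $h_D\|\nabla w-\mathbb{Q}_{k-1,D}\nabla w\|_{L_2(\partial D)}^2\apprle\|\nabla w-\mathbb{Q}_{k-1,D}\nabla w\|_{L_2(D)}^2+h_D^2|\nabla w-\mathbb{Q}_{k-1,D}\nabla w|_{H^1(D)}^2$, which is exactly the left-hand side of \eqref{l2Q2} and so $\apprle h^{2k}\|w\|_{H^{k+1}(D)}^2$; summing over $D$ closes the argument. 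Here no division by $h_D$ occurs, so no sharp scaling is required.

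The one delicate point — and the step I would be most careful about — is the element-scale estimate used in \eqref{e-trace1}: because the boundary term there is multiplied by $h_D^{-1}$, the $L_2(D)$-projection error must be controlled with the sharp local power $h_D^{2(k+1)}$ (and its $H^1$-seminorm with $h_D^{2k}$), so that the $h_D^{-2}$ and the $h_D^{2(k+1)}$ combine to $h_D^{2k}$. This is precisely what the proof of Lemma~\ref{ler} delivers element by element through the Bramble--Hilbert estimate (Lemma~\ref{bramble}), even though \eqref{l2Q1} is written with the global $h$. If one prefers not to lean on that reading, the same $h_D^{2k}$ bound follows directly by writing $Q_{k,D}^0 w-w=Q_{k,D}^0(w-p)-(w-p)$ for the Bramble--Hilbert polynomial $p\in\mathbb{P}_k(D)$, estimating $\|Q_{k,D}^0(w-p)\|_{L_2(\partial D)}$ by the scalar inverse trace inequality used in the proof of Lemma~\ref{l_discrete} and $\|w-p\|_{L_2(\partial D)}$ by Lemma~\ref{trace}, and then invoking Lemma~\ref{bramble}.
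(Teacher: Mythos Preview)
Your proof is correct and follows essentially the same route as the paper: Cauchy--Schwarz to peel off $|v|_{k-1,w}$, then Lemma~\ref{trace} and Lemma~\ref{ler} on the remaining factor. The only cosmetic difference is in handling $Q_{k,D}^b$ in \eqref{e-trace1}: the paper uses the orthogonality $\langle w-Q_{k,D}^b w,\,v_0-v_b\rangle_{\partial D}=0$ (since $v_0-v_b\in\mathbb{P}_k(e)$ edgewise) to replace $Q_{k,D}^b w$ by $w$ directly inside the pairing, whereas you use the equivalent $L_2$-stability of $Q_{k,D}^b$ after Cauchy--Schwarz; both lead to the same bound.
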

\begin{proof}
The proof is similar as Lemma 5.3 in \cite{Lin15}. For completeness, we give the proof here.

To get \eqref{e-trace1}, we have
\begin{eqnarray*}
\left|
\sum\limits_{D\in \mathcal{T}_h} h_D^{-1}
\langle  
Q_{k,D}^0 w-Q_{k,D}^b w , 
v_0-v_b
\rangle_{\partial D}
\right|
&=&
\left|
\sum\limits_{D\in \mathcal{T}_h} h_D^{-1}
\langle  
Q_{k,D}^0 w- w , 
v_0-v_b
\rangle_{\partial D}
\right|
\\
&\apprle&
\left(
\sum\limits_{D\in \mathcal{T}_h} h_D^{-1}
\| Q_{k,D}^0 w- w \|_{L_2(\partial D)}^2
\right)^{1/2}
\left(
\sum\limits_{D\in \mathcal{T}_h} h_D^{-1}
\| v_0-v_b \|_{L_2(\partial D)}^2
\right)^{1/2},
\end{eqnarray*}
with Lemma \ref{trace} and Lemma \ref{ler}, \eqref{e-trace1} is obtained.

To get \eqref{e-trace2}, we have
\begin{eqnarray*}
\left|
\sum\limits_{D\in \mathcal{T}_h}
\langle  
(\nabla w - \mathbb{Q}_{k-1,D} \nabla w)\cdot\vn,   
v_0-v_b
\rangle_{\partial D}
\right|
\apprle
	\left(
	\sum\limits_{D\in \mathcal{T}_h} h_D 
	\|\nabla w - \mathbb{Q}_{k-1,D} \nabla w\|_{L_2(\partial D)}^2
	\right)^{1/2}
	\left(
	\sum\limits_{D\in \mathcal{T}_h} h_D^{-1}
	\| v_0-v_b \|_{L_2(\partial D)}^2
	\right)^{1/2},
\end{eqnarray*}
with Lemma \ref{trace} and Lemma \ref{ler}, \eqref{e-trace2} is obtained.
\end{proof}
\section{The Weak Galerkin Finite Element Method for Poisson's Equation}
Let $\Omega$ be a bounded convex polygonal or polyhedral domain in $\mathbb{R}^n, (n=2,3)$, $f\in L_2(\Omega)$, the Poisson's equation is
\begin{equation}\label{poisson-equation}
\begin{cases}
\ -\Delta u            &= f, \\
\ u|_{\partial \Omega} &= 0.
\end{cases}
\end{equation}
And $\forall v\in V_h,$ the weak gradient of $v$ is defined on each element $D$ by \eqref{wg2},   respectively. 
And for any $u,v\in V_h$, the bilinear form is defined as
\begin{equation}\label{bhr} 
a_{h}(u,v) = \sum\limits_{D\in\mathcal{T}_h}\int_{D}\nabla_{w,k-1,D} u\cdot\nabla_{w,k-1,D} v\ {\rm d} x.
\end{equation}
The stabilization term is:
\begin{equation}\label{stable} 
s_{h}(u,v) = \sum\limits_{D\in\mathcal{T}_h} 
h_D^{-1}
\langle  
u_0-u_b, v_0-v_b
\rangle_{\partial D}.
\end{equation}
A numerical solution for \eqref{poisson-equation} can be obtained by seeking $u_h =(u_0,u_b)\in V^0_h$ such that 
\begin{equation}\label{num-poisson}
a_s(u_h,v_h) = a_h(u_h,v_h)+s_h(u_h,v_h) = (f,v_0)_{\Omega}, \ \forall v=(v_0,v_b)\in V_h^0.
\end{equation}
Same as Lemma 7.1 in \cite{Lin15}, we have the discrete Poincar$\acute{\rm e}$ inequality:
\begin{lemma}\label{poincare}
Suppose the partition $\mathcal{T}_h$ is shape regular. Then we have
$$
\|v_0\|_{L_2(\Omega)} \apprle |v|_{k-1,w}, \ \forall v=(v_0,v_b)\in V_h^0,
$$
the hidden constant only depends on $\rho_D$ and $k$.
\end{lemma}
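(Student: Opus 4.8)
The plan is to mimic the classical proof of the Poincaré inequality via a duality/auxiliary-problem argument, adapted to the weak Galerkin setting exactly as in Lemma 7.1 of \cite{Lin15}, but being careful that every mesh-dependent constant that appears is controlled only through $\rho_D$ and $k$ (and not through any edge-to-diameter ratio, since edges may be arbitrarily small). First I would introduce the auxiliary elliptic problem: let $\psi\in H^1_0(\Omega)$ solve $-\Delta\psi = v_0$ in $\Omega$ with $\psi|_{\partial\Omega}=0$. Since $\Omega$ is convex, elliptic regularity gives $\psi\in H^2(\Omega)$ with $\|\psi\|_{H^2(\Omega)}\apprle\|v_0\|_{L_2(\Omega)}$. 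Then
$$
\|v_0\|_{L_2(\Omega)}^2 = (v_0,v_0)_\Omega = -(v_0,\Delta\psi)_\Omega = \sum_{D\in\mathcal{T}_h}\left(-\int_D v_0\,\nabla\cdot(\nabla\psi)\,{\rm d}x\right).
$$
On each element I would insert the $L_2$ projection $\mathbb{Q}_{k-1,D}\nabla\psi$ of $\nabla\psi$ into the discrete weak gradient formula \eqref{wg2}: writing $\vq_k = \mathbb{Q}_{k-1,D}\nabla\psi \in[\mathbb{P}_{k-1}(D)]^n$, we have
$$
-\int_D v_0\,\nabla\cdot\vq_k\,{\rm d}x = (\nabla_{w,k-1,D}v,\vq_k)_D - \int_{\partial D} v_b\,\vq_k\cdot\vn\,{\rm d}S.
$$
Subtracting this from the term $-\int_D v_0\,\nabla\cdot(\nabla\psi)\,{\rm d}x$ and summing over $D$, the quantity $-\int_D v_0\,\nabla\cdot(\nabla\psi - \vq_k)\,{\rm d}x$ can be integrated by parts (using $v_0$ is a polynomial on $D_0$ and $\nabla\psi-\vq_k\in H({\rm div};D)$) to produce a volume term $\int_D\nabla v_0\cdot(\nabla\psi-\mathbb{Q}_{k-1,D}\nabla\psi)\,{\rm d}x$ and a boundary term; the boundary terms then get recombined with the $v_b$ boundary term above, and since $\sum_D\int_{\partial D}v_b\,\nabla\psi\cdot\vn\,{\rm d}S = 0$ (continuity of $v_b$ across interior faces, $v_b=0$ on $\partial\Omega$, and $\nabla\psi\cdot\vn$ single-valued because $\psi\in H^2$), everything collapses to
$$
\|v_0\|_{L_2(\Omega)}^2 = \sum_{D\in\mathcal{T}_h}\Big[(\nabla_{w,k-1,D}v,\mathbb{Q}_{k-1,D}\nabla\psi)_D + \int_D\nabla v_0\cdot(\nabla\psi-\mathbb{Q}_{k-1,D}\nabla\psi)\,{\rm d}x + \langle v_0-v_b,(\nabla\psi-\mathbb{Q}_{k-1,D}\nabla\psi)\cdot\vn\rangle_{\partial D}\Big].
$$

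Next I would bound the three groups of terms. For the first, Cauchy–Schwarz over elements gives $\le |v|_{k-1,w}\,(\sum_D\|\mathbb{Q}_{k-1,D}\nabla\psi\|_{L_2(D)}^2)^{1/2}\le|v|_{k-1,w}\,\|\psi\|_{H^1(\Omega)}$. For the second term I would first need to control $\|\nabla v_0\|_{L_2(D)}$ by the weak norm; this is where Lemma \ref{leq} enters — since $\nabla_{w,k-1,D}(Q_h v) = \mathbb{Q}_{k-1,D}(\nabla v_0)$ is not directly what we want, the cleaner route (as in \cite{Lin15}) is to express $\int_D\nabla v_0\cdot(\nabla\psi-\mathbb{Q}_{k-1,D}\nabla\psi)\,{\rm d}x$ using orthogonality: since $\nabla\psi-\mathbb{Q}_{k-1,D}\nabla\psi\perp[\mathbb{P}_{k-1}(D)]^n$ and, if $k-1\ge$ the polynomial degree of $\nabla v_0$ (which holds when $k\ge1$ and $v_0\in\mathbb{P}_k$, so $\nabla v_0\in[\mathbb{P}_{k-1}]^n$), this term is identically zero. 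So the second group vanishes. The third group is controlled by Cauchy–Schwarz and the trace estimate, exactly as in \eqref{e-trace2}: it is $\apprle(\sum_D h_D^{-1}\|v_0-v_b\|_{L_2(\partial D)}^2)^{1/2}(\sum_D h_D\|\nabla\psi-\mathbb{Q}_{k-1,D}\nabla\psi\|_{L_2(\partial D)}^2)^{1/2}$, and the second factor is handled by Lemma \ref{trace} plus Lemma \ref{ler} (with $k$ there replaced by $1$, i.e. the case $\xi=\psi\in H^2$) to give $\apprle h\,\|\psi\|_{H^2(\Omega)}$, while the first factor is $\le|v|_{k-1,w}$.

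Combining, $\|v_0\|_{L_2(\Omega)}^2 \apprle |v|_{k-1,w}\,\|\psi\|_{H^2(\Omega)} \apprle |v|_{k-1,w}\,\|v_0\|_{L_2(\Omega)}$, and dividing through gives the claim, with hidden constant depending only on $\rho_D$, $k$, and the convexity/elliptic-regularity constant of $\Omega$ (which is fixed). The main obstacle I anticipate is the bookkeeping in the integration-by-parts/cancellation step: one must verify carefully that the interior-face boundary contributions involving $v_b$ and the true normal flux $\nabla\psi\cdot\vn$ cancel (requiring $\psi\in H^2$ so the flux is single-valued, and $v\in V_h^0$ so $v_b$ is single-valued on interior faces and zero on $\partial\Omega$), and that the "polynomial" term $\int_D\nabla v_0\cdot(\nabla\psi-\mathbb{Q}_{k-1,D}\nabla\psi)$ genuinely drops out by orthogonality — this is precisely the place where the restriction $k\ge1$ is used. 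The rest is routine application of Lemmas \ref{trace}, \ref{leq}, and \ref{ler}, none of whose constants see small edges.
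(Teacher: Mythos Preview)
Your proposal is correct and is essentially the proof of Lemma~7.1 in \cite{Lin15}, which is exactly what the paper invokes in place of its own argument; you have simply supplied the details and checked that every constant entering (through Lemmas~\ref{trace} and~\ref{ler}) depends only on $\rho_D$ and $k$, not on any edge/face size. The only cosmetic difference is that you realize the auxiliary vector field as $\nabla\psi$ by solving $-\Delta\psi=v_0$ on the convex domain, whereas \cite{Lin15} cites directly the existence of $\phi\in[H^1(\Omega)]^n$ with $\nabla\cdot\phi=v_0$ and $\|\phi\|_{H^1(\Omega)}\apprle\|v_0\|_{L_2(\Omega)}$; the two constructions are equivalent here.
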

Also, we have the existence and uniqueness of the solution of \eqref{num-poisson} with the same proof as Lemma 7.2 in \cite{Lin15}.

To get the error analysis, we need another Lemma.
\begin{lemma}\label{K-bound}
	Assume that $D$ satisfies all the assumptions as specified above. Then, we have
	\begin{equation}
	\|\nabla v_0\|^2_{L_2(D)}
	\apprle
	\|\nabla_{w,k-1,D} v\|^2_{L_2(D)}+
	h_D^{-1}\|v_b-v_0\|^2_{L_2(\partial D)},\  \forall v\in V_h|_D,
	\end{equation}
	the hidden constant only depends on $\rho_D$ and $k$.
\end{lemma}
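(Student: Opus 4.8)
The plan is to use the ordinary gradient $\nabla v_0$ itself as the test function in the definition \eqref{wg2} of the discrete weak gradient. Since $v_0 \in \mathbb{P}_k(D)$, we have $\nabla v_0 \in [\mathbb{P}_{k-1}(D)]^n$, so it is an admissible choice of $\vq_k$. First I would integrate by parts in $\int_D \nabla v_0 \cdot \vq\,{\rm d}x$ for a generic $\vq \in [\mathbb{P}_{k-1}(D)]^n$ and substitute the result into \eqref{wg2}; this produces the identity
$$
\int_D \nabla v_0\cdot\vq\,{\rm d}x \;=\; (\nabla_{w,k-1,D}v,\vq)_D \;-\; \langle v_b - v_0,\ \vq\cdot\vn\rangle_{\partial D}, \qquad \forall\,\vq\in[\mathbb{P}_{k-1}(D)]^n .
$$

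Next, taking $\vq = \nabla v_0$ makes the left-hand side equal to $\|\nabla v_0\|_{L_2(D)}^2$. The first term on the right is bounded by $\|\nabla_{w,k-1,D}v\|_{L_2(D)}\,\|\nabla v_0\|_{L_2(D)}$ via the Cauchy--Schwarz inequality. For the boundary term, Cauchy--Schwarz on $\partial D$ gives the bound $\|v_b-v_0\|_{L_2(\partial D)}\,\|\nabla v_0\|_{L_2(\partial D)}$, and then I would invoke Lemma \ref{l_discrete} with the polynomial vector field $\nabla v_0 \in [\mathbb{P}_k(D)]^n$ to obtain the discrete trace estimate $h_D\|\nabla v_0\|_{L_2(\partial D)}^2 \apprle \|\nabla v_0\|_{L_2(D)}^2$, that is $\|\nabla v_0\|_{L_2(\partial D)} \apprle h_D^{-1/2}\|\nabla v_0\|_{L_2(D)}$. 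Hence the boundary term is $\apprle h_D^{-1/2}\|v_b-v_0\|_{L_2(\partial D)}\,\|\nabla v_0\|_{L_2(D)}$.

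Collecting the two estimates yields $\|\nabla v_0\|_{L_2(D)}^2 \apprle \bigl(\|\nabla_{w,k-1,D}v\|_{L_2(D)} + h_D^{-1/2}\|v_b-v_0\|_{L_2(\partial D)}\bigr)\|\nabla v_0\|_{L_2(D)}$; dividing by $\|\nabla v_0\|_{L_2(D)}$ (the case $\nabla v_0 \equiv 0$ being trivial) and then squaring, with $(a+b)^2 \le 2a^2+2b^2$, gives precisely the asserted inequality, the hidden constant being that of Lemma \ref{l_discrete} and so depending only on $\rho_D$ and $k$. The only point that requires some care is the bookkeeping of polynomial degrees --- that $\nabla v_0$ has degree $k-1$, which is exactly what makes it simultaneously a legal test function in \eqref{wg2} and (being of degree at most $k$) a legal argument for Lemma \ref{l_discrete}; beyond that integration-by-parts identity and the discrete trace inequality there is no genuine analytic obstacle.
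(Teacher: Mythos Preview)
Your proof is correct and follows essentially the same approach as the paper: derive the identity relating $\nabla v_0$ and $\nabla_{w,k-1,D}v$ via integration by parts in \eqref{wg2}, then apply the discrete trace estimate of Lemma~\ref{l_discrete}. The only difference is the choice of test function: you take $\vq=\nabla v_0$ and bound $\|\nabla v_0\|_{L_2(D)}$ directly, whereas the paper takes $\vq=\nabla_{w,k-1,D}v-\nabla v_0$, obtaining the slightly sharper intermediate bound $\|\nabla_{w,k-1,D}v-\nabla v_0\|_{L_2(D)}\apprle h_D^{-1/2}\|v_b-v_0\|_{L_2(\partial D)}$ and then concluding by the triangle inequality.
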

\begin{proof}
	Suppose on $D$ we have $v=(v_0,v_b)$, $\vq \in H({\rm div};D)$ , then by the definition of $\nabla_{w,k-1,D}$, we have
	\begin{eqnarray*}
		(\nabla_{w,k-1,D} v,\vq )_D 
		&=& 
		-(v_0,\nabla\cdot\vq)_D +\langle v_b,\vq\cdot\vn \rangle_{\partial D}\\
		&=& 
		(\nabla v_0,\vq)_D +\langle v_b-v_0,\vq\cdot\vn \rangle_{\partial D}
	\end{eqnarray*}
	so that
	$$
	(\nabla_{w,k-1,D} v-\nabla v_0,\vq )_D = \langle v_b-v_0,\vq\cdot\vn \rangle_{\partial D}
	$$
	let $\vq = \nabla_{w,k-1,D} v-\nabla v_0$, then
	$$
	(\nabla_{w,k-1,D} v-\nabla v_0,\nabla_{w,k-1,D} v-\nabla v_0 )_D = \langle v_b-v_0,(\nabla_{w,k-1,D} v-\nabla v_0)\cdot\vn \rangle_{\partial D},
	$$
	By the discrete inequalities of polynomials, Lemma \ref{l_discrete}, we have
	$$
	\|\nabla_{w,k-1,D} v-\nabla v_0\|_{L_2(D)}\apprle
	h_D^{-1/2}\|v_b-v_0\|_{L_2(\partial D)},
	$$
	so that
	\begin{equation*}
	\|\nabla v_0\|^2_{L_2(D)}
	\apprle
	\|\nabla_{w,k-1,D} v\|^2_{L_2(D)}+
	h_D^{-1}\|v_b-v_0\|^2_{L_2(\partial D)}.
	\end{equation*}
\end{proof}
\subsection{Error Analysis}
Let $u$ be the solution of \eqref{poisson-equation} and $v\in V_h^0$. It follows from the definition of weak gradient \eqref{wg2}, and the integration by parts that
\begin{eqnarray}
(\nabla_{w,k-1,D} Q_hu,\nabla_{w,k-1,D}v)_D
=
 (\nabla u,\nabla v_0)_D
-
\langle v_0-v_b,(\mathbb{Q}_{k-1,D}\nabla u)\cdot\vn \rangle_{\partial D}. \label{wI_h}
\end{eqnarray}
Then, multiply \eqref{poisson-equation} by $v_0$ of $v=(v_0,v_b)\in V_h^0$ we have
\begin{equation}\label{wu}
\sum\limits_{D\in\mathcal{T}_h} (\nabla u,\nabla v_0)_D
=(f,v_0)+\sum\limits_{D\in\mathcal{T}_h}\langle  v_0-v_b , \nabla u\cdot\vn  \rangle_{\partial D}.
\end{equation}
Combine \eqref{wI_h} and \eqref{wu}, we have
\begin{equation}\label{w-error1}
\sum\limits_{D\in\mathcal{T}_h} 
(\nabla_{w,k-1,D} Q_hu,\nabla_{w,k-1,D}v)_D
=
(f,v_0)_{\Omega} 
+
\sum\limits_{D\in\mathcal{T}_h}
\langle  
v_0-v_b , 
(\nabla u - \mathbb{Q}_{k-1,D} \nabla u)\cdot\vn  
\rangle_{\partial D}.
\end{equation}
Adding $s_h({Q}_h u, v)$ to both sides of \eqref{w-error1} gives
\begin{equation}\label{w-error2}
a_s(Q_h u,v)
=
(f,v_0)_{\Omega}  
+
\sum\limits_{D\in\mathcal{T}_h}
\langle  
v_0-v_b , 
(\nabla u - \mathbb{Q}_{k-1,D} \nabla u)\cdot\vn  
\rangle_{\partial D} +s_h(Q_hu,v).
\end{equation}
Subtracting \eqref{num-poisson} from \eqref{w-error2}, we have the error equation
\begin{equation}\label{w-error3}
a_s(e_h,v)
=
\sum\limits_{D\in\mathcal{T}_h}
\langle  
v_0-v_b , 
(\nabla u - \mathbb{Q}_{k-1,D} \nabla u)\cdot\vn  
\rangle_{\partial D} +s_h(Q_hu,v).
\end{equation}
where
$$
e_h|_D = (e_0,e_b)|_D:= (Q_{k,D}^0 u -u_0, Q_{k,D}^b u -u_b)|_D= (u_h-Q_hu)|_{D}
$$
which is the error between the weak Galerkin finite element solution and the $L_2$ projection of the exact solution.
Then we define a norm $\|\cdot\|_h$ as
$$
\|\vv\|_h^2 := \sum\limits_{D\in\mathcal{T}_h} \|\vv\|_{L_2(D)}, \ \forall \vv\in [L_2(\Omega)]^n,
$$
and 
$$
(\nabla_{w,k-1} u_h)|_D = \nabla_{w,k-1,D} (u_h|_D);\quad (\mathbb{Q}_{k-1}(\nabla u))|_D = \mathbb{Q}_{k-1,D} (\nabla u|_D).
$$
\begin{theorem}\label{th1}
Let $u_h \in V_h$ be the weak Galerkin finite element solution of the
problem (4.1). Assume that the exact solution is so regular that
$u \in H^{k+1} (\Omega)$. Then we have
\begin{eqnarray}
\|\nabla u - \nabla_{w,k-1} u_h\|_h 
&\apprle& h^k\|u\|_{H^{k+1}(\Omega)}, \label{we1} \\
\|\nabla u - \nabla u_0\|_h 
&\apprle& 
 h^k\|u\|_{H^{k+1}(\Omega)}, \label{we2} 
\end{eqnarray}
the hidden constant only depends on $\rho_D$ and $k$.
\end{theorem}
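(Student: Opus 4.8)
The plan is to run the standard weak Galerkin energy argument, starting from the error equation \eqref{w-error3} and the identity $e_h=u_h-Q_hu$. First I would observe that, since $u|_{\partial\Omega}=0$, the projection $Q_hu$ lies in $V_h^0$, so $e_h\in V_h^0$ is an admissible test function. Taking $v=e_h$ in \eqref{w-error3} and using $a_s(e_h,e_h)=a_h(e_h,e_h)+s_h(e_h,e_h)=|e_h|_{k-1,w}^2$ (compare \eqref{w1n}, \eqref{bhr}, \eqref{stable}) gives
\begin{equation*}
|e_h|_{k-1,w}^2=\sum_{D\in\mathcal{T}_h}\langle e_0-e_b,(\nabla u-\mathbb{Q}_{k-1,D}\nabla u)\cdot\vn\rangle_{\partial D}+s_h(Q_hu,e_h).
\end{equation*}
The first sum is precisely the left-hand side of \eqref{e-trace2} with $w=u$, $v=e_h$, and $s_h(Q_hu,e_h)=\sum_{D}h_D^{-1}\langle Q_{k,D}^0u-Q_{k,D}^bu,e_0-e_b\rangle_{\partial D}$ is the left-hand side of \eqref{e-trace1} with $w=u$, $v=e_h$. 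Hence Lemma \ref{lem_h1} bounds the right-hand side by $\apprle h^k\|u\|_{H^{k+1}(\Omega)}|e_h|_{k-1,w}$, whence the key estimate $|e_h|_{k-1,w}\apprle h^k\|u\|_{H^{k+1}(\Omega)}$.

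For \eqref{we1} I would insert $\nabla_{w,k-1}Q_hu$ and use linearity of the discrete weak gradient:
\begin{equation*}
\|\nabla u-\nabla_{w,k-1}u_h\|_h\le\|\nabla u-\nabla_{w,k-1}Q_hu\|_h+\|\nabla_{w,k-1}e_h\|_h.
\end{equation*}
By Lemma \ref{leq}, $\nabla_{w,k-1,D}Q_hu=\mathbb{Q}_{k-1,D}\nabla u$ on each $D$, so the first term equals $\|\nabla u-\mathbb{Q}_{k-1}\nabla u\|_h$, which is $\apprle h^k\|u\|_{H^{k+1}(\Omega)}$ by summing \eqref{l2Q2} over $\mathcal{T}_h$. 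For the second term, $\|\nabla_{w,k-1}e_h\|_h^2=a_h(e_h,e_h)\le|e_h|_{k-1,w}^2\apprle h^{2k}\|u\|_{H^{k+1}(\Omega)}^2$ by the first step. Adding the two bounds proves \eqref{we1}.

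For \eqref{we2} I would split, on each $D$, $\nabla u-\nabla u_0=(\nabla u-\nabla Q_{k,D}^0u)+\nabla e_0$ with $e_0=Q_{k,D}^0u-u_0$, so that
\begin{equation*}
\|\nabla u-\nabla u_0\|_h^2\apprle\sum_{D\in\mathcal{T}_h}|u-Q_{k,D}^0u|_{H^1(D)}^2+\sum_{D\in\mathcal{T}_h}\|\nabla e_0\|_{L_2(D)}^2.
\end{equation*}
The first sum is $\apprle h^{2k}\|u\|_{H^{k+1}(\Omega)}^2$ by the triangle inequality combined with Lemma \ref{bramble} and Lemma \ref{l4} (exactly the computation used inside the proof of Lemma \ref{ler}). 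For the second sum, Lemma \ref{K-bound} applied to $e_h|_D$ gives $\|\nabla e_0\|_{L_2(D)}^2\apprle\|\nabla_{w,k-1,D}e_h\|_{L_2(D)}^2+h_D^{-1}\|e_b-e_0\|_{L_2(\partial D)}^2$, and summing over $\mathcal{T}_h$ the right-hand side is exactly $|e_h|_{k-1,w}^2\apprle h^{2k}\|u\|_{H^{k+1}(\Omega)}^2$. Combining the two estimates gives \eqref{we2}.

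Given the lemmas already assembled, the argument is mostly bookkeeping; the one conceptual point that needs attention is that the consistency step produces \emph{two} boundary residual terms — the genuine flux residual $(\nabla u-\mathbb{Q}_{k-1,D}\nabla u)\cdot\vn$ and the stabilization term $s_h(Q_hu,e_h)$, which, unlike in a conforming method, does not vanish — and that both must be absorbed into $|e_h|_{k-1,w}$ through \eqref{e-trace1}--\eqref{e-trace2}. The substantive difficulty, namely keeping every hidden constant dependent on $\rho_D$ and $k$ alone in the presence of arbitrarily small edges or faces, has already been handled in Lemma \ref{trace}, Lemma \ref{ler}, Lemma \ref{lem_h1} and Lemma \ref{K-bound}, so here it only needs to be invoked; I expect no further obstacle beyond correctly bookkeeping the broken norm $\|\cdot\|_h$ and the cancellation $\langle Q_{k,D}^bu-u,\,v_0-v_b\rangle_{\partial D}=0$ that underlies \eqref{e-trace1}.
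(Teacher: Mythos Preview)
Your proposal is correct and follows essentially the same route as the paper's proof: test the error equation with $e_h$ to obtain $|e_h|_{k-1,w}\apprle h^k\|u\|_{H^{k+1}(\Omega)}$ via Lemma~\ref{lem_h1}, then split through $\nabla_{w,k-1}Q_hu=\mathbb{Q}_{k-1}\nabla u$ (Lemma~\ref{leq}) for \eqref{we1} and through $\nabla Q_{k,D}^0u$ plus Lemma~\ref{K-bound} for \eqref{we2}. Your explicit remark that $Q_hu\in V_h^0$ (so $e_h$ is admissible) is a detail the paper leaves implicit; otherwise the arguments coincide.
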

\begin{proof}
Let $v = e_h$ in \eqref{w-error3}, with \eqref{lem_h1} we have
\begin{equation}
|e_h|_{k-1,w}^2
=
\sum\limits_{D\in\mathcal{T}_h}
\langle  
e_0-e_b , 
(\nabla u - \mathbb{Q}_{k-1,D} \nabla u)\cdot\vn  
\rangle_{\partial D} +s_h(Q_hu,e_h).
\end{equation}
It then follows from Lemma \ref{lem_h1} that
\begin{equation}\label{error1}
|e_h|_{k-1,w}^2 \apprle  h^k\|u\|_{H^{k+1}(\Omega)}|e_h|_{k-1,w}.
\end{equation}
Based on \eqref{error1}, firstly, we prove \eqref{we1}, 
\begin{eqnarray*}
\|\nabla u - \nabla_{w,k-1} u_h\|_h 
&\leq&
\|\nabla u - \mathbb{Q}_{k-1}(\nabla u)\|_h 
+\|\mathbb{Q}_{k-1}(\nabla u) - \nabla_{w,k-1} u_h\|_h, 
\end{eqnarray*}
with Lemma \ref{leq} and Lemma \ref{ler}, we have
$$
\|\mathbb{Q}_{k-1}(\nabla u) - \nabla_{w,k-1} u_h\|_h
=
\|\nabla_{w,k-1} ( {Q}_{h} u - u_h) \|_h\leq |e_h|_{k-1,w}
$$
so that we have \eqref{we1}.

Secondly, with Lemma \ref{K-bound}, we have
\begin{eqnarray*}
	\sum_{D\in\mathcal{T}_h}\|\nabla (Q^0_{k,D} u -u_h|_{D_0})\|^2_{L_2(D)} 
	&=&
	\sum\limits_{D\in\mathcal{T}_h} \|\nabla  e_0\|^2_{L_2(D)}\\
	&\apprle&
	\sum\limits_{D\in\mathcal{T}_h} \|\nabla_{w,k-1} e_h\|^2_{L_2(D)} +h_D^{-1}\|e_b-e_0\|^2_{L_2(\partial D)}\\
	&\apprle&
	|u_h-Q_h u|_{k-1,w}^2
\end{eqnarray*}
which means
$$
\sum_{D\in\mathcal{T}_h}\|\nabla (Q^0_{k,D} u -u_h|_{D_0})\|^2_{L_2(D)}  \apprle h^{2k}\|u\|^2_{H^{k+1}(\Omega)}.
$$
Also by
$$
\sum_{D\in\mathcal{T}_h}\|\nabla (Q^0_{k,D} u -u)\|^2_{L_2(D)}  \apprle h^{2k}\|u\|^2_{H^{k+1}(\Omega)},
$$
with Lemma \ref{ler}, we have \eqref{we2}
$$
\|\nabla u - \nabla u_0\|_h  \apprle h^k\|u\|_{H^{k+1}(\Omega)}.
$$
\end{proof}
With Lemma \ref{poincare}, we have \eqref{l2e} with the similar proof as Theorem 8.2 in \cite{Lin15}.  Also, we complete the proof of estimation for edge based error:
$$
\|u-u_h\|_{\mathcal{E}_h}^2 := \sum\limits_{e\in\mathcal{E}_h} h_e\int_e |u-u_b|^2 \rm{d} s ,
$$ 
where $u$ is the exact solution and $u_h$ is the numerical solution of \eqref{poisson-equation}.
\begin{theorem}\label{t2}
	Let $u_h \in V_h$ be the weak Galerkin finite element solution of the
	problem (4.1). Assume that the exact solution 
	$u \in H^{k+1} (\Omega)$. Then we have
	\begin{eqnarray}
	\|u - u_0\|_{L_2(\Omega)}
	&\apprle& h^{k+1}\|u\|_{H^{k+1}(\Omega)}, \label{l2e} \\
		\|u - u_h\|_{\mathcal{E}_h}
		&\apprle& h^{k+1}\|u\|_{H^{k+1}(\Omega)}, \label{l2eb}
	\end{eqnarray}
	the hidden constant only depends on $\rho_D$ and $k$.
\end{theorem}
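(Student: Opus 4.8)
The plan is to prove the $L_2$ estimate \eqref{l2e} by a duality (Aubin–Nitsche) argument, and then deduce the edge-based estimate \eqref{l2eb} from it together with the $H^1$-type bounds already available from Theorem \ref{th1}. For \eqref{l2e}, first I would introduce the auxiliary problem $-\Delta\psi = e_0$ in $\Omega$, $\psi|_{\partial\Omega}=0$; since $\Omega$ is convex, elliptic regularity gives $\psi\in H^2(\Omega)$ with $\|\psi\|_{H^2(\Omega)}\apprle \|e_0\|_{L_2(\Omega)}$. The key identity to establish is the analogue of \eqref{wI_h}–\eqref{w-error1} with $\psi$ in the role of $u$ and $k=1$ in the dual gradient space: testing the dual problem against $e_0$ and using the definition of the weak gradient plus integration by parts element-by-element, one obtains
\begin{equation*}
\|e_0\|_{L_2(\Omega)}^2
=
\sum_{D\in\mathcal{T}_h}(\nabla_{w,k-1,D}Q_h\psi,\nabla_{w,k-1,D}e_h)_D
-\sum_{D\in\mathcal{T}_h}\langle e_0-e_b,(\nabla\psi-\mathbb{Q}_{k-1,D}\nabla\psi)\cdot\vn\rangle_{\partial D}
-s_h(Q_h\psi,e_h).
\end{equation*}
I would then substitute the error equation \eqref{w-error3} (with $v=Q_h\psi$, legitimately in $V_h^0$) into the first sum to replace $a_s(e_h,Q_h\psi)$ by its right-hand side, so that $\|e_0\|_{L_2(\Omega)}^2$ becomes a sum of boundary pairings of the form $\langle\,\cdot\,-\,\cdot\,,(\nabla w-\mathbb{Q}_{k-1,D}\nabla w)\cdot\vn\rangle_{\partial D}$ and stabilization terms involving $u$ and $\psi$.

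Each of these terms is then bounded by Cauchy–Schwarz on $\partial D$, the trace inequality (Lemma \ref{trace}), and the projection estimates (Lemma \ref{ler}), exactly in the style of Lemma \ref{lem_h1}: the $\psi$-factors contribute $h^{1}\|\psi\|_{H^2(\Omega)}$ (one order, since $k=1$ for the dual), while the $u$-factors contribute $|e_h|_{k-1,w}$ or $h^k\|u\|_{H^{k+1}(\Omega)}$. Combining with the already-proved bound $|e_h|_{k-1,w}\apprle h^k\|u\|_{H^{k+1}(\Omega)}$ from \eqref{error1} and with $\|\psi\|_{H^2(\Omega)}\apprle\|e_0\|_{L_2(\Omega)}$, everything collapses to $\|e_0\|_{L_2(\Omega)}^2\apprle h^{k+1}\|u\|_{H^{k+1}(\Omega)}\,\|e_0\|_{L_2(\Omega)}$, hence $\|e_0\|_{L_2(\Omega)}\apprle h^{k+1}\|u\|_{H^{k+1}(\Omega)}$. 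Finally \eqref{l2e} follows from the triangle inequality $\|u-u_0\|_{L_2(\Omega)}\le\|u-Q_{k,D}^0u\|_{L_2(\Omega)}+\|e_0\|_{L_2(\Omega)}$ together with \eqref{l2Q1}.

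For the edge estimate \eqref{l2eb}, I would write $u-u_b = (u-Q_{k,D}^bu)+(Q_{k,D}^bu-u_b)=(u-Q_{k,D}^bu)+(Q_{k,D}^0u-u_b)-(Q_{k,D}^0u-u)$ on each $e\subset\partial D$, so that after multiplying by $h_e\approx h_D$ and summing it suffices to control $\sum_D h_D\|u-Q_{k,D}^0u\|_{L_2(\partial D)}^2$, $\sum_D h_D\|u-Q_{k,D}^bu\|_{L_2(\partial D)}^2$, and $\sum_D h_D\|e_b-Q_{k,D}^0u+u_0-u_0\|_{L_2(\partial D)}^2$; the last reduces to $\sum_D h_D\|e_b-e_0\|_{L_2(\partial D)}^2 + \sum_D h_D\|e_0\|_{L_2(\partial D)}^2$. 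The first two are $O(h^{k+1})$ by the trace inequality combined with Lemma \ref{ler}. The term $\sum_D h_D\|e_b-e_0\|_{L_2(\partial D)}^2 = \sum_D h_D^2\cdot h_D^{-1}\|e_b-e_0\|_{L_2(\partial D)}^2 \apprle h^2|e_h|_{k-1,w}^2\apprle h^{2(k+1)}\|u\|_{H^{k+1}(\Omega)}^2$. For $\sum_D h_D\|e_0\|_{L_2(\partial D)}^2$ I would use the trace inequality $h_D\|e_0\|_{L_2(\partial D)}^2\apprle \|e_0\|_{L_2(D)}^2 + h_D^2|e_0|_{H^1(D)}^2$ and bound $\sum_D\|e_0\|_{L_2(D)}^2=\|e_0\|_{L_2(\Omega)}^2\apprle h^{2(k+1)}\|u\|_{H^{k+1}(\Omega)}^2$ from \eqref{l2e}, while $\sum_D h_D^2|e_0|_{H^1(D)}^2 = \sum_D h_D^2\|\nabla e_0\|_{L_2(D)}^2\apprle h^2\sum_D\|\nabla e_0\|_{L_2(D)}^2\apprle h^{2(k+1)}\|u\|_{H^{k+1}(\Omega)}^2$ using the bound on $\sum_D\|\nabla e_0\|_{L_2(D)}^2$ established inside the proof of Theorem \ref{th1}. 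I expect the main obstacle to be the careful bookkeeping in the duality identity — in particular verifying that the $\psi$-dependent stabilization term $s_h(Q_h\psi,e_h)$ and the cross terms genuinely carry the extra power of $h$ rather than only $h^0$, which hinges on using $|Q_h\psi - $ (its own trace)$|$ estimates of order $h^{3/2}$ on $\partial D$ (one full order beyond the generic $h^{1/2}$), i.e. the interpolation/trace estimate applied to $\psi\in H^2$ with the dual polynomial degree; all the remaining pieces are routine applications of the lemmas already proved.
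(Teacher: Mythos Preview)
Your duality argument for \eqref{l2e} matches the paper's proof essentially step for step (the paper calls the dual solution $\phi$ rather than $\psi$ and arranges the identity slightly differently, but the substance is identical, including the final triangle inequality with $Q_{k,D}^0u$).

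For \eqref{l2eb} there is one conceptual slip worth flagging: you write ``$h_e\approx h_D$'', but the whole point of this paper is to allow meshes with arbitrarily small edges or faces, so no lower bound $h_e\apprge h_D$ is available. Fortunately only the trivial inequality $h_e\le h_D$ (together with each $e$ belonging to at most two elements) is needed to pass from $\sum_{e\in\mathcal{E}_h}h_e\int_e|\cdot|^2$ to $\sum_{D\in\mathcal{T}_h}h_D\int_{\partial D}|\cdot|^2$, so the argument survives---but you should state it that way. Your displayed decomposition of $u-u_b$ also contains an algebraic slip (as written the right-hand side equals $2u-Q_{k,D}^bu-u_b$, not $u-u_b$), and the resulting list of four boundary terms is more elaborate than necessary. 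The paper simply writes $u-u_b=(u-u_0)+(u_0-u_b)$: the first piece is handled by the trace inequality (Lemma~\ref{trace}) together with \eqref{l2e} and \eqref{we2}, and the second piece is split as $(e_b-e_0)+(Q_{k,D}^0u-Q_{k,D}^bu)$, giving $h_D\|e_0-e_b\|_{L_2(\partial D)}^2\le h_D^2|e_h|_{k-1,w}^2$ and $h_D\|Q_{k,D}^0u-u\|_{L_2(\partial D)}^2$ via Lemma~\ref{trace} and \eqref{l2Q1}. This avoids having to estimate $\sum_D h_D\|e_0\|_{L_2(\partial D)}^2$ or $\sum_D h_D\|u-Q_{k,D}^bu\|_{L_2(\partial D)}^2$ as separate items, though your route through those terms is also valid once the algebra is corrected.
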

\begin{proof}
Firstly, to prove \eqref{l2e}, we begin with a dual problem seeking $\phi\in H_0^1(\Omega)$ such that
$
-\Delta \phi = e_0.
$
With the assumption of $\Omega$, we have $\|\phi\|_{H^2(\Omega)}\apprle \|e_0\|_{L_2(\Omega)}$.

Then we have
\begin{equation}\label{l2e1}
\|e_0\|_{L_2(\Omega)}^2 = 
\sum\limits_{D\in\mathcal{T}_h}(\nabla\phi,\nabla e_0)_D 
-
\sum\limits_{D\in\mathcal{T}_h}
\langle 
\nabla\phi\cdot\vn, 
e_0-e_b 
\rangle_{\partial D}.
\end{equation}
Let $u=\phi$ and $v=e_h$ in \eqref{wI_h}, we have
\begin{equation}\label{l2e2}
(\nabla_{w,k-1,D} Q_h \phi,\nabla_{w,k-1,D} e_h)_D
=
(\nabla \phi,\nabla e_0)_D
-
\langle e_0-e_b,(\mathbb{Q}_{k-1,D}\nabla \phi)\cdot\vn \rangle_{\partial D}. 
\end{equation}
Combine \eqref{l2e1} and \eqref{l2e2}, we have
\begin{equation}\label{l2e3}
\|e_0\|_{L_2(\Omega)}^2 = 
(\nabla_{w,k-1} Q_h \phi,\nabla_{w,k-1} e_h)_\Omega
+
\sum\limits_{D\in\mathcal{T}_h}
\langle 
(\mathbb{Q}_{k-1,D}\nabla \phi-\nabla\phi)\cdot\vn, 
e_0-e_b 
\rangle_{\partial D}.
\end{equation}
Then let $v = Q_h \phi$ in \eqref{w-error3}, such that
\begin{equation}\label{l2e4}
(\nabla_{w,k-1} Q_h \phi,\nabla_{w,k-1} e_h)_\Omega 
=
\sum\limits_{D\in\mathcal{T}_h}
\langle  
Q^0_{k,D}\phi- Q^b_{k,D}\phi , 
(\nabla u - \mathbb{Q}_{k-1,D} \nabla u)\cdot\vn  
\rangle_{\partial D} +s_h(Q_hu,Q_h \phi) -s_h(e_h,Q_h \phi).
\end{equation}
Plugging \eqref{l2e4} in \eqref{l2e3}, we get the following equation
\begin{eqnarray}\label{l2e5}
\|e_0\|_{L_2(\Omega)}^2
&=& 
\sum\limits_{D\in\mathcal{T}_h}
\langle  
Q^0_{k,D}\phi- Q^b_{k,D}\phi , 
(\nabla u - \mathbb{Q}_{k-1,D} \nabla u)\cdot\vn  
\rangle_{\partial D}\nonumber \\ 
&&+
\sum\limits_{D\in\mathcal{T}_h}
\langle 
(\mathbb{Q}_{k-1,D}\nabla \phi-\nabla\phi)\cdot\vn, 
e_0-e_b 
\rangle_{\partial D}
+
s_h(Q_hu,Q_h \phi) -s_h(e_h,Q_h \phi).
\end{eqnarray}
By Lemma \ref{ler}, Lemma \ref{lem_h1}, Lemma \ref{poincare} and \eqref{error1}, we can estimate the right hand side terms of  \eqref{l2e5} same as in \cite{Lin15}.
Then we have
\begin{eqnarray*}
\|Q_k^0u - u_0\|_{L_2(\Omega)}
\apprle h^{k+1}\|u\|_{H^{k+1}(\Omega)},
\end{eqnarray*}
with 
$$
\|Q_k^0u - u\|_{L_2(\Omega)}
\apprle h^{k+1}\|u\|_{H^{k+1}(\Omega)},
$$
the error estimate \eqref{l2e} is obtained.

Then, to prove \eqref{l2eb}, we begin with the edge based error
\begin{eqnarray*}
\|u-u_h\|_{\mathcal{E}_h}^2 &:=& \sum\limits_{e\in\mathcal{E}_h} h_e\int_e |u-u_b|^2 \rm{d} s\\
&\apprle&
\sum\limits_{D\in\mathcal{T}_h} h_D\int_{\partial D} |u-u_b|^2 \rm{d} s\\
&\apprle&
\sum\limits_{D\in\mathcal{T}_h} h_D 
\left( 
  \langle u-u_0,   u-u_0   \rangle_{\partial D}
+2\langle u-u_0,   u_0-u_b \rangle_{\partial D}
+ \langle u_0-u_b, u_0-u_b \rangle_{\partial D}
\right)\\
&\apprle&
\sum\limits_{D\in\mathcal{T}_h} h_D 
\left( 
\|u-u_0 \|^2_{L_2(\partial D)}
+ \|u_0-u_b\|^2_{L_2(\partial D)}
\right).
\end{eqnarray*}
By Lemma \ref{trace}, \eqref{we2}, \eqref{l2e}, we have
\begin{eqnarray*}
h_D  
\|u-u_0 \|^2_{L_2(\partial D)}	
&\apprle&
\|u-u_0\|_{L_2(D)}^2 + h_D^2|u-u_0 |_{H^1(D)}^2,
\end{eqnarray*}	
so that 
$$
\sum\limits_{D\in\mathcal{T}_h} h_D 
\|u-u_0 \|^2_{L_2(\partial D)} \apprle h^{2(k+1)}\|u\|^2_{H^{k+1}(\Omega)}.
$$
Then, by Lemma \ref{trace}, \eqref{l2Q1}, \eqref{w1n},  \eqref{error1}, we have
\begin{eqnarray*}
h_D 
\|u_0-u_b\|^2_{L_2(\partial D)}	
&\apprle&
h_D\|u_0-Q_{k,D}^0u -(u_b-Q_{k,D}^b u)\|^2_{L_2(\partial D)}	
+
h_D\|Q_{k,D}^0u-Q_{k,D}^b u\|^2_{L_2(\partial D)}	\\
&\apprle&
h_D\|e_0 -e_b\|^2_{L_2(\partial D)}	
+
h_D\|Q_{k,D}^0u - u\|^2_{L_2(\partial D)},
\end{eqnarray*}
so that 
$$
\sum\limits_{D\in\mathcal{T}_h} h_D 
\|u_0-u_b\|^2_{L_2(\partial D)}	 \apprle h^{2(k+1)}\|u\|^2_{H^{k+1}(\Omega)}.
$$	
And \eqref{l2eb} is proved.
\end{proof}
\section{Conclusions}
The shape regularity assumptions here are different with the assumptions in \cite{Wang14} and \cite{Lin15}. To get the similar results, it requires new Lemmas from Lemma \ref{bramble} to Lemma \ref{lem_h1},  which are valid under the new shape regularity assumptions. Also we provide element based error estimation in \eqref{we2} of Theorem \ref{th1} and edge based error estimation in \eqref{l2eb} of Theorem \ref{t2} which make it easier to compare the numerical solutions with the exact ones.

\end{document}